\documentclass{article}%
\usepackage{amssymb}
\usepackage{amsfonts}
\usepackage{amsmath}
\usepackage{graphicx}%
\setcounter{MaxMatrixCols}{30}
\providecommand{\U}[1]{\protect\rule{.1in}{.1in}}
\newtheorem{theorem}{Theorem}

\newtheorem{corollary}{Corollary}

\newtheorem{example}{Example}

\newtheorem{lemma}[theorem]{Lemma}

\newtheorem{proposition}{Proposition}
\newtheorem{remark}{Remark}

\newenvironment{proof}[1][Proof]{\noindent\textbf{#1.} }{\ \rule{0.5em}{0.5em}}
\begin{document}

\title{Majorization bounds for distribution function}
\author{Ismihan Bairamov\\Department of Mathematics, Izmir University of Economics \\Izmir, Turkey. \ E-mail: ismihan.bayramoglu@ieu.edu.tr }
\maketitle

\begin{abstract}
Let $X$ be a random variable with distribution function $F,$ and $X_{1}%
,X_{2},...,X_{n}$ are independent copies of $X.$ Consider the order statistics
$X_{i:n},$ $i=1,2,...,n$ \ and denote $F_{i:n}(x)=P\{X_{i:n}\leq x\}.$ \ Using
majorization theory we write upper and lower bounds for $F$ expressed in terms
of mixtures of distribution functions of order statistics, i.e. $%
{\displaystyle\sum\limits_{i=1}^{n}}
p_{i}F_{i:n}$ and $%
{\displaystyle\sum\limits_{i=1}^{n}}
p_{i}F_{n-i+1:n}.$ \ It is shown that these bounds converge to $F$ \ for a
particular sequence $(p_{1}(m),p_{2}(m),...,p_{n}(m)),m=1,2,..$ as
$m\rightarrow\infty.$

\end{abstract}

\section{Introduction}

Let $X_{1},X_{2},...,X_{n}$ be independent and identically distributed (iid)
random variables with distribution function (cdf) $F$ and $X_{1:n}\leq
X_{2:n}\leq\cdots\leq X_{n:n}$ be the corresponding order statistics. Order
statistics are very important in the theory of statistics and its
applications. The theory of order statistics is well documented in David
(1981), David and Nagaraja (2003), Arnold et al. (1992) and in many research
papers dealing with different theoretical properties and applications of
ordinary order statistics and general models of ordered random variables.
Order statistics play a special role in statistical theory of reliability,
since they can be interpreted as the failure times of $n$ units with lifetimes
$X_{1},X_{2},...,X_{n}$ placed on a life test. A system of $n$ components is
called a $k$-out-of-$n$ system if it functions if and only if at least $k$
components function and therefore, life time of such a system is $X_{n-k+1:n}%
$. \ \ (see Barlow and Proschan, 1975). Consider a coherent system composed of
$n$ identical components with lifetimes $X_{1},X_{2},...,X_{n}$ having
distribution function $F$. Then the distribution function of the system
lifetime $T$ can be expressed as a convex combination of order statistics
$X_{1:n}\leq X_{2:n}\leq\cdots\leq X_{n:n}$ using Samanige signatures
$(p_{1},p_{2},...,p_{n})$ as follows:%
\begin{equation}
P\{T\leq x\}=%
{\displaystyle\sum\limits_{i=1}^{n}}
z_{i}F_{i:n}(x), \label{a1}%
\end{equation}
where $F_{i:n}(x)=P\{X_{i:n}\leq x\}$ and $z_{i}=P\{T=X_{i:n}\},$
$i=1,2,...,n$ are signatures (see Samaniego 2007). \ The system reliability
can be expressed as
\[
P\{T>x\}=%
{\displaystyle\sum\limits_{i=1}^{n}}
z_{i}\bar{F}_{i:n}(x),
\]
where $\bar{F}_{i:n}(x)=1-F_{i.n}(x).$ If $z_{i}=1/n,$ $i=1,2,...,n,$ then
$P\{T\leq x\}=\frac{1}{n}%
{\displaystyle\sum\limits_{i=1}^{n}}
F_{i:n}(x)=F(x).$ This means that if the system signature vector is $(\frac
{1}{n},\frac{1}{n},...,\frac{1}{n}),$ then the system reliability is the same
with the reliability of a single component. \ For a general coherent system,
assuming that the system reliability is known, is it possible to determine the
reliability of the components? The results presented in this paper allow to
answer partially this question, i.e. it follows that for a particular choice
of signatures, $\bar{F}$ can be approximated by the reliability of the system.

In general, in this note we consider mixtures of distribution functions of
order statistics $K_{n}(x):=%
{\displaystyle\sum\limits_{i=1}^{n}}
p_{i}F_{i:n}(x)$ and $H_{n}(x):=%
{\displaystyle\sum\limits_{i=1}^{n}}
p_{i}F_{n-i+1:n}(x)$ and using well known inequalities of majorization theory
we show that for a particular choice of $p_{i}$'s, $\ H_{n}(x)\leq F(x)\leq
K_{n}(x)$ for all $x\in%
\mathbb{R}
.$ \ It is shown that the similar inequalities can be written for the sample
mean and mixtures of order statistics. For a particular choice of vector
$\ (p_{1},p_{2},...,p_{n})$ the $L_{2}$ distance between $H_{n}(x)$ and
$K_{n}(x)$ can be made as small as we want. $\ $

\section{Main Results}

Let $\ \mathbf{a=(}a_{1},a_{2},...,a_{n})\in%
\mathbb{R}
^{n}$ , $\mathbf{b}=(b_{1},b_{2},...,b_{n})\in%
\mathbb{R}
^{n}$ and $a_{[1]}\geq a_{[2]}\geq\cdots\geq a_{[n]}$ denote the components of
$\mathbf{a}$ in decreasing order. The vector $\mathbf{a}$ is said to be
majorized by the vector $\mathbf{b}$ and $\text{denoted by }\mathbf{a}%
\prec\mathbf{b},$ if
\[
\sum_{i=1}^{k}a_{[i]}\leq\sum_{i=1}^{k}b_{[i]}\text{ for }k=1,2,\cdots,n-1
\]
and%
\[
\sum_{i=1}^{n}a_{[i]}=\sum_{i=1}^{n}b_{[i].}%
\]
The details of the theory of majorization can be found in Marshall et al.
(2011). The following two theorems are important for our study.

\begin{proposition}
\label{Proposition 1} Denote $D=\{(x_{1},x_{2},...,x_{n}):x_{1}\geq x_{2}%
\geq\cdots\geq x_{n}\},$ $\mathbf{a=(}a_{1},a_{2},...,a_{n}),$ $\mathbf{b}$
$=(b_{1},b_{2},...,b_{n}).$ The inequality
\[
\sum_{i=1}^{n}a_{i}x_{i}\leq\sum_{i=1}^{n}b_{i}x_{i}%
\]
holds for all $(x_{1},x_{2},...,x_{n})\in D$ if and only if $\mathbf{a}%
\prec\mathbf{b}$ in $D.$(Marshal et al. 2011, page 160).
\end{proposition}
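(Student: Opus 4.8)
The plan is to reduce everything to Abel's summation by parts, which converts the linear functional into a form where the constraints from $\mathbf{a}\prec\mathbf{b}$ and from membership in $D$ appear as separate nonnegative factors. Set $c_i=b_i-a_i$ and let $C_k=\sum_{i=1}^{k}c_i$ be the partial sums of the differences. Summation by parts yields the identity
\[
\sum_{i=1}^{n}c_i x_i=\sum_{k=1}^{n-1}C_k\,(x_k-x_{k+1})+C_n\,x_n ,
\]
and this single identity does essentially all of the work in both directions. Here the partial sums $C_k$ are formed in the given order of the coefficients, which is the order consistent with the decreasing cone $D$.

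For the direction $\mathbf{a}\prec\mathbf{b}\Rightarrow$ inequality, I would read off from the definition of majorization that $C_k=\sum_{i=1}^{k}(b_i-a_i)\geq 0$ for $k=1,\dots,n-1$ and $C_n=0$. Since $(x_1,\dots,x_n)\in D$ forces $x_k-x_{k+1}\geq 0$ for each $k$, every summand $C_k(x_k-x_{k+1})$ in the identity is nonnegative, while the boundary term $C_n x_n$ vanishes. Hence $\sum_{i=1}^{n}c_i x_i\geq 0$, which is exactly $\sum_{i=1}^{n}a_i x_i\leq\sum_{i=1}^{n}b_i x_i$.

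For the converse, assuming the inequality holds for every $x\in D$, I would substitute extremal test vectors that isolate each constraint. Taking $x=(\underbrace{1,\dots,1}_{k},0,\dots,0)\in D$ forces $C_k\geq 0$ for each $k\leq n-1$; taking $x=(1,\dots,1)$ and then $x=(-1,\dots,-1)$, both of which lie in $D$, forces $C_n\geq 0$ and $C_n\leq 0$, hence $C_n=0$. Together these recover precisely the majorization relations, completing the equivalence.

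The step I expect to require the most care is the bookkeeping in the converse: one must check that the chosen test vectors genuinely lie in $D$ and that they recover every partial-sum constraint. In particular, the equality $C_n=0$ needs the two-sided test, and this reflects a structural feature worth flagging, namely that $D$ is unbounded in the direction of $x_n$, which is exactly why the endpoint condition is an equality rather than an inequality. The other point to pin down is the reading of $\prec$ in $D$ — whether the partial sums are formed in the given order or after rearrangement into decreasing order — since the Abel identity above is tied to the given order of the coefficients $a_i,b_i$.
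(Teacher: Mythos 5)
Your proof is correct, and it is worth noting at the outset that the paper itself offers no proof of this proposition at all: it is quoted verbatim from Marshall, Olkin and Arnold (2011, p.~160), so you have supplied the argument the paper delegates to its reference. Your Abel-summation route is in fact the standard one for this classical lemma: the identity $\sum_{i=1}^{n}c_i x_i=\sum_{k=1}^{n-1}C_k(x_k-x_{k+1})+C_n x_n$ checks out (telescoping the right side recovers $\sum c_i x_i$), the forward direction is immediate from $C_k\geq 0$, $C_n=0$ and $x_k-x_{k+1}\geq 0$, and your converse test vectors $(1,\dots,1,0,\dots,0)$, $(1,\dots,1)$ and $(-1,\dots,-1)$ all genuinely lie in $D$ and isolate exactly the constraints $C_k\geq 0$ for $k\leq n-1$ and $C_n=0$. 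The ordering subtlety you flag at the end is real and is resolved by the qualifier ``in $D$'' in the statement: the equivalence is with the given-order partial-sum conditions, and these coincide with majorization precisely because $\mathbf{a},\mathbf{b}\in D$ forces $a_{[i]}=a_i$ and $b_{[i]}=b_i$; without that hypothesis the equivalence fails (e.g.\ $n=2$, $\mathbf{a}=(1,0)$, $\mathbf{b}=(0,1)$ satisfies $\mathbf{a}\prec\mathbf{b}$ but $x_1\leq x_2$ fails on $D$), so you would do well to state explicitly that both vectors are assumed to lie in $D$ rather than leaving it as a flagged ambiguity. As a bonus, the same identity applied with reversed ordering immediately yields the paper's Proposition 2 (the increasing case), so your method covers both of the paper's quoted tools at once.
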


\begin{proposition}
\bigskip\label{Proposition 2}The inequality
\[
\sum_{i=1}^{n}a_{i}x_{i}\leq\sum_{i=1}^{n}b_{i}x_{i}%
\]
holds whenever $x_{1}\leq x_{2}\leq\cdots\leq x_{n}$ if and only if
\begin{align*}%
{\displaystyle\sum\limits_{i=1}^{k}}
a_{i}  &  \geq%
{\displaystyle\sum\limits_{i=1}^{k}}
b_{i},\text{ }k=1,2,...,n-1\\%
{\displaystyle\sum\limits_{i=1}^{n}}
a_{i}  &  =%
{\displaystyle\sum\limits_{i=1}^{n}}
b_{i}.
\end{align*}

\end{proposition}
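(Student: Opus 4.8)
The plan is to rewrite the desired inequality in difference form and then apply summation by parts (Abel's identity). Setting $c_{i}:=b_{i}-a_{i}$, the claimed inequality $\sum_{i=1}^{n}a_{i}x_{i}\leq\sum_{i=1}^{n}b_{i}x_{i}$ is equivalent to $\sum_{i=1}^{n}c_{i}x_{i}\geq 0$, while the two conditions on the right translate into $S_{k}\leq 0$ for $k=1,\dots,n-1$ and $S_{n}=0$, where I write $S_{k}:=\sum_{i=1}^{k}c_{i}$ (and $S_{0}:=0$). So it suffices to show that $\sum_{i=1}^{n}c_{i}x_{i}\geq 0$ for every nondecreasing $x$ if and only if $S_{n}=0$ and $S_{k}\leq 0$ for $1\leq k\leq n-1$.

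For the ``only if'' direction I would test the inequality on a few simple nondecreasing vectors. Taking $x=(1,1,\dots,1)$ and $x=(-1,-1,\dots,-1)$ forces $\sum c_{i}\geq 0$ and $\sum c_{i}\leq 0$ respectively, hence $S_{n}=\sum_{i=1}^{n}c_{i}=0$. Then, for each fixed $k\in\{1,\dots,n-1\}$, I would test on the step vector $x^{(k)}$ with $x^{(k)}_{i}=0$ for $i\leq k$ and $x^{(k)}_{i}=1$ for $i>k$, which is nondecreasing; the hypothesis gives $\sum_{i=k+1}^{n}c_{i}\geq 0$, i.e. $S_{n}-S_{k}\geq 0$, and since $S_{n}=0$ this yields $S_{k}\leq 0$.

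For the ``if'' direction I would run Abel summation. Writing $c_{i}=S_{i}-S_{i-1}$ and reindexing, one obtains the identity
\[
\sum_{i=1}^{n}c_{i}x_{i}=S_{n}x_{n}-S_{0}x_{1}-\sum_{i=1}^{n-1}S_{i}(x_{i+1}-x_{i}).
\]
Using $S_{0}=0$ and $S_{n}=0$, the boundary terms vanish and the expression reduces to $-\sum_{i=1}^{n-1}S_{i}(x_{i+1}-x_{i})$. Each factor $x_{i+1}-x_{i}$ is nonnegative because $x$ is nondecreasing, and each $S_{i}\leq 0$, so every summand is nonnegative and the whole sum is $\geq 0$, as required.

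The routine checks are the two algebraic translations in the first paragraph and the verification of Abel's identity; I expect the only genuinely delicate point to be bookkeeping, namely making sure that the equality constraint $S_{n}=0$ is exactly what annihilates the boundary term $S_{n}x_{n}$ in the summation-by-parts formula, so that the sign information on the partial sums $S_{1},\dots,S_{n-1}$ can be read off directly. Conceptually this is the discrete analogue of the fact that $\int c\,dx\geq 0$ for all monotone $x$ is governed by the sign of the accumulated increments of $c$.
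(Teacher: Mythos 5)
Your proof is correct and complete in both directions; the paper itself supplies no argument for this proposition, citing only Marshall et al. (2011, p.~639), and your summation-by-parts argument (reduce to $\sum_i c_i x_i \geq 0$ with $c_i = b_i - a_i$, extract $S_n = 0$ and $S_k \leq 0$ from constant and step-function test vectors, then conclude via Abel's identity $\sum_{i=1}^{n} c_i x_i = S_n x_n - S_0 x_1 - \sum_{i=1}^{n-1} S_i (x_{i+1} - x_i)$) is precisely the standard proof underlying that citation. Nothing is missing: the equality constraint correctly kills the boundary term, and the sign conditions on $S_1, \dots, S_{n-1}$ do the rest.
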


(Marshall et al. 2011, page 639).

\ \ 

Now, let $X_{1},X_{2},...,X_{n}$ be iid random variables with cdf $F,$ and
survival function $\bar{F}=1-F.$ Let $X_{1:n}\leq X_{2:n}\leq\cdots\leq
X_{n:n}$ be corresponding order statistics and $F_{i:n}(x)=P\{X_{i:n}\leq
x\}.$ We are interested in mixtures $%
{\displaystyle\sum\limits_{i=1}^{n}}
p_{i}\bar{F}_{i:n}(x)$ of cdf's of order statistics, where $\ p_{i}\geq0,$
$p_{1}\geq p_{2}\geq\cdots\geq p_{n}$ and $%
{\displaystyle\sum\limits_{i=1}^{n}}
p_{i}=1.$

Denote
\[
D_{+}^{1}=\{(x_{1},x_{2},...,x_{n}):x_{i}\geq0,i=1,2,...,n;\text{ }x_{1}\geq
x_{2}\geq\cdots\geq x_{n},\sum_{i=1}^{n}x_{i}=1\}.
\]

\begin{lemma}
Let $(p_{1},p_{2},...,p_{n})\in D_{+}^{1}$ . Then
\begin{equation}
H_{n}(x)\equiv%
{\displaystyle\sum\limits_{i=1}^{n}}
p_{i}F_{n-i+1:n}(x)\leq F(x)\leq%
{\displaystyle\sum\limits_{i=1}^{n}}
p_{i}F_{i:n}(x)\equiv K_{n}(x)\text{ for all }x\in%
\mathbb{R}
\label{a2}%
\end{equation}
and the equality holds if and only if $(p_{1},p_{2},\cdots,p_{n})=(\frac{1}%
{n},\frac{1}{n},...,\frac{1}{n}).$
\end{lemma}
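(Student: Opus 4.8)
The plan is to fix $x \in \mathbb{R}$, abbreviate $y_i = F_{i:n}(x)$, and exploit two elementary structural facts about the cdf's of order statistics. First, since $\{X_{i:n} \le x\}$ is the event that at least $i$ of the $X_j$ fall below $x$, increasing $i$ shrinks this event, so $y_1 \ge y_2 \ge \cdots \ge y_n$; thus $(y_1,\dots,y_n)$ lies in the cone $D$ of Proposition 1, while $(F_{n:n}(x),\dots,F_{1:n}(x))$ is nondecreasing. Second, summing the indicator identity $\sum_{i=1}^n \mathbf 1\{X_{i:n} \le x\} = \#\{j : X_j \le x\}$ and taking expectations gives the averaging relation $\sum_{i=1}^n y_i = nF(x)$, equivalently $\frac1n \sum_{i=1}^n y_i = F(x)$. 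These two facts reduce both inequalities to a single majorization comparison against the uniform vector $\mathbf u = (1/n,\dots,1/n)$.

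Next I would verify that $\mathbf u \prec (p_1,\dots,p_n)$. Both vectors are already written in decreasing order ($\mathbf u$ trivially, $(p_i)$ by hypothesis) and share the common sum $1$; since the partial average $\frac1k\sum_{i=1}^k p_i$ of a nonincreasing sequence is at least the overall average $1/n$, we get $\sum_{i=1}^k p_i \ge k/n$ for $k=1,\dots,n-1$, which is exactly $\mathbf u \prec (p_1,\dots,p_n)$. The upper bound then follows from Proposition 1 with $\mathbf a = \mathbf u$, $\mathbf b = (p_i)$ applied to the decreasing vector $(y_i)\in D$, yielding $F(x)=\sum_i u_i y_i \le \sum_i p_i y_i = K_n(x)$. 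For the lower bound I would apply Proposition 2 with $\mathbf a = (p_i)$, $\mathbf b = \mathbf u$ to the increasing vector $(F_{n:n}(x),\dots,F_{1:n}(x))$; the condition $\sum_{i=1}^k p_i \ge k/n$ with equality at $k=n$ is precisely the majorization just established, giving $H_n(x)=\sum_i p_i F_{n-i+1:n}(x) \le \sum_i u_i F_{n-i+1:n}(x) = F(x)$. Since $x$ was arbitrary, the chain (a2) holds on all of $\mathbb{R}$.

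For the equality characterization the ``if'' direction is immediate: $\mathbf p = \mathbf u$ collapses both $K_n$ and $H_n$ to $\frac1n \sum_i F_{i:n} = F$ by the averaging relation. For the converse I would suppose $K_n \equiv F$ (the case $H_n \equiv F$ being identical) and write $\sum_{i=1}^n (p_i - 1/n) F_{i:n}(x) = 0$ for all $x$. Writing $F_{i:n}(x) = \sum_{j=i}^n \binom nj u^j (1-u)^{n-j}$ with $u = F(x)$ and regrouping by the Bernstein polynomials $\binom nj u^j(1-u)^{n-j}$, the identity becomes $\sum_{j=1}^n \bigl(\sum_{i=1}^j (p_i - 1/n)\bigr)\binom nj u^j(1-u)^{n-j} = 0$ for every $u$ in the range of $F$. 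The hard part lives here: I must invoke linear independence of these polynomials to force each partial sum $\sum_{i=1}^j(p_i - 1/n)$ to vanish, whence $p_i = 1/n$ for all $i$. This step genuinely needs $F$ to be nondegenerate, since for a one-point mass all the $F_{i:n}$ coincide and equality holds for every $\mathbf p$; I would therefore state the nondegeneracy hypothesis explicitly and carry out the Bernstein linear-independence argument, which I expect to be the only nonroutine part of the proof.
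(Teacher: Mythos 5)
Your handling of the two inequalities is essentially the paper's own proof, which consists of exactly the three ingredients you assemble: the monotonicity $F_{1:n}(x)\geq F_{2:n}(x)\geq\cdots\geq F_{n:n}(x)$, the majorization $(\frac{1}{n},\frac{1}{n},\ldots,\frac{1}{n})\prec(p_{1},\ldots,p_{n})$, and Propositions 1 and 2 applied to the decreasing and increasing arrangements of the $F_{i:n}(x)$ respectively. The paper states these steps without verification; your checks --- the averaging identity $\sum_{i=1}^{n}F_{i:n}(x)=nF(x)$ via the indicator count, and $\sum_{i=1}^{k}p_{i}\geq k/n$ because partial averages of a nonincreasing sequence dominate the overall average --- are correct and simply fill in what the paper leaves implicit.

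The equality clause is where you diverge, necessarily so: the paper asserts it but gives no proof at all, and you are right that as stated it fails for degenerate $F$ (a point mass makes all the $F_{i:n}$ coincide with $F$, so equality holds for every $\mathbf{p}$), so your added nondegeneracy hypothesis is a genuine correction to the statement. However, your announced tool --- linear independence of the Bernstein basis --- is not sufficient under mere nondegeneracy. Linear independence forces the coefficients of $\sum_{j=1}^{n}c_{j}\binom{n}{j}u^{j}(1-u)^{n-j}=0$, with $c_{j}=\sum_{i=1}^{j}(p_{i}-\frac{1}{n})$, to vanish only if the identity holds at sufficiently many values of $u$, and the range of $F$ may meet $(0,1)$ in a single point (take $F$ Bernoulli: its only interior value is $1-\theta$), where one linear equation in $n$ unknowns concludes nothing. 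The repair is cheap and uses structure you already established: the majorization you verified says precisely that $c_{j}\geq0$ for all $j$, with $c_{n}=0$, while every Bernstein polynomial $\binom{n}{j}u^{j}(1-u)^{n-j}$, $1\leq j\leq n$, is strictly positive at any $u_{0}\in(0,1)$; nondegeneracy supplies one $x_{0}$ with $u_{0}=F(x_{0})\in(0,1)$, and a vanishing sum of nonnegative terms with positive weights forces $c_{j}=0$ for all $j$, hence $p_{i}=c_{i}-c_{i-1}+\frac{1}{n}=\frac{1}{n}$. With positivity at one interior point substituted for linear independence, your argument is complete (the $H_{n}$ case follows by the same computation after the reindexing $i\mapsto n-i+1$), and it then proves strictly more than the paper does.
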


\begin{proof}
Since $F_{1:n}(x)$ $\geq F_{2:n}(x)\geq\cdots\geq F_{n:n}(x)$ for all $x\in%
\mathbb{R}
,$ and $(\frac{1}{n},\frac{1}{n},...,\frac{1}{n})\prec(p_{1},p_{2}%
,...,p_{n}),$ the right hand side of the inequality (\ref{a2}) follows from
the Proposition 1 and left hand side follows from Proposition 2.
\end{proof}

\begin{corollary}
Let \ $\mathbf{p}=(p_{1},p_{2},...,p_{n})\in D_{+}^{1},$ $\mathbf{q=}%
(q_{1},q_{2},...,q_{n})\in D_{+}^{1}$ and $\mathbf{p}\prec\mathbf{q.}$ Then
\[%
{\displaystyle\sum\limits_{i=1}^{n}}
q_{i}F_{n-i+1:n}(x)\leq%
{\displaystyle\sum\limits_{i=1}^{n}}
p_{i}F_{n-i+1:n}(x)\leq F(x)\leq%
{\displaystyle\sum\limits_{i=1}^{n}}
p_{i}F_{i:n}(x)\leq%
{\displaystyle\sum\limits_{i=1}^{n}}
q_{i}F_{i:n}(x)
\]

\end{corollary}

\begin{example}
\bigskip\label{Example 1} Let $F(x)=x,$ $0\leq x\leq1.$ Then $H_{n}(x)=%
{\displaystyle\sum\limits_{i=1}^{n}}
q_{i}%
{\displaystyle\sum\limits_{k=i}^{n}}
\binom{n}{k}x^{k}(1-x)^{n-k}$ and $K_{n}(x)=%
{\displaystyle\sum\limits_{i=1}^{n}}
q_{i}%
{\displaystyle\sum\limits_{k=n-i+1}^{n}}
\binom{n}{k}x^{k}(1-x)^{n-k}.$ Let $n=3$ and $\mathbf{q}=(q_{1},q_{2}%
,q_{3})=(\frac{5}{9},\frac{3}{9},\frac{1}{9}).$ By simple calculations we
have
\[
H_{3}(x)=\frac{1}{3}(2x^{2}+x)\leq F(x)\leq\frac{1}{3}(5x-2x^{2})=K_{3}(x).
\]
Let $\mathbf{p}=(p_{1},p_{2},p_{3})=(\frac{6}{15},\frac{5}{15},\frac{4}{15}),$
then the functions $H_{3}(x)$ and $K_{3}(x)$ for this $\mathbf{p}$ are as
follows:
\[
H_{3}(x)=\frac{1}{5}(x^{2}+x)\leq F(x)\leq\frac{6}{5}(x-x^{2})=K_{3}(x).
\]
It is clear that $\mathbf{p}\prec\mathbf{q}.$ \ Below we present the graphs of
the functions $H_{3}(x),$ $F(x)=x,$ and $K_{3}(x)$ for two different vectors
$\mathbf{q}$ and $\mathbf{p:}$
\[
\]%
\begin{align*}
Figure\text{ 1. \ }H_{3}(x),\text{ }F(x) &  =x,\text{ and }K_{3}(x),\\
\text{for }(q_{1},q_{2},q_{3}) &  =(\frac{5}{9},\frac{3}{9},\frac{1}{9})\text{
and }(p_{1},p_{2},p_{3})=(\frac{6}{15},\frac{5}{15},\frac{4}{15}).
\end{align*}

\end{example}

For $(q_{1},q_{2},q_{3})=(\frac{5}{9},\frac{3}{9},\frac{1}{9}),$the $L_{2}$
distance between the functions $H_{3}(x),K_{3}(x)$ can be calculated and it is
$d(H_{3}(x),K_{3}(x))=%
{\displaystyle\int\limits_{0}^{1}}
(H_{3}(x)-K_{3}(x))^{2}dx=\frac{8}{135}\simeq0.059259.$ For $(p_{1}%
,p_{2},p_{3})=(\frac{6}{15},\frac{5}{15},\frac{4}{15})$ the distance is $%
{\displaystyle\int\limits_{0}^{1}}
(H_{3}(x)-K_{3}(x))^{2}dx=\frac{2}{375}\simeq0.005333.$

Note that the vector $(\frac{1}{n},\frac{1}{n},...,\frac{1}{n})$ is the
"smallest" in the sense of majorization, among the vectors $(p_{1}%
,p_{2},\cdots,p_{n})\in D_{+}^{1}$ , i.e. $(\frac{1}{n},\frac{1}{n}%
,...,\frac{1}{n})\prec(p_{1},p_{2},\cdots,p_{n})$ for all $(p_{1},p_{2}%
,\cdots,p_{n})\in D_{+}^{1}.$ $\ $Now the problem of interest is: \textit{for
a given }$n,$\textit{\ how small can the distance between }$H_{n}%
(x)$\textit{\ and }$K_{n}(x)$\textit{\ be made by appropriate choice of the
vector }$(p_{1},p_{2},\cdots,p_{n})?$ \ 

The following theorem answers this question.

\begin{theorem}
\label{Theorem 1A} There exists a sequence $\mathbf{p}(m)=(p_{1}%
(m),p_{2}(m),...,p_{n}(m))\in D_{+}^{1},$ $m=1,2,...$ such that%
\begin{equation}
H_{n}^{(m)}(x)\equiv%
{\displaystyle\sum\limits_{i=1}^{n}}
p_{i}(m)F_{n-i+1:n}(x)\leq F(x)\leq%
{\displaystyle\sum\limits_{i=1}^{n}}
p_{i}(m)F_{i:n}(x)\equiv K_{n}^{(m)}(x)\text{ for all }x\in%
\mathbb{R}
\label{a3}%
\end{equation}
and%
\begin{equation}
\underset{m\rightarrow\infty}{\ \lim}%
{\displaystyle\sum\limits_{i=1}^{n}}
p_{i}(m)F_{n-i+1:n}(x)=\underset{m\rightarrow\infty}{\ \lim}%
{\displaystyle\sum\limits_{i=1}^{n}}
p_{i}(m)F_{i:n}(x)=F(x)\text{ for all }x\in%
\mathbb{R}
. \label{a4}%
\end{equation}
Furthermore,
\begin{equation}%
{\displaystyle\int\limits_{-\infty}^{\infty}}
\left\vert K_{n}^{(m)}(x)-H_{n}^{(m)}(x)\right\vert dx=o(\frac{1}{m^{1-\alpha
}}),\text{ }0<\alpha<1. \label{a5}%
\end{equation}

\end{theorem}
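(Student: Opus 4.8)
The plan is to exhibit an explicit sequence that descends, in the majorization order, toward the ``smallest'' vector $(\frac1n,\dots,\frac1n)$, and then to read off all three assertions. First I would set
\[
p_i(m)=\frac1n+\Big(\frac{n+1}{2}-i\Big)\delta_m,\qquad i=1,\dots,n,
\]
with $\delta_m\downarrow 0$ chosen small enough that $p_n(m)\ge 0$ (e.g.\ $\delta_m=\min(1/m,\tfrac{2}{n(n-1)})$, which equals $1/m$ for all large $m$, the binding constraint being $p_n(m)\ge0\iff\delta_m\le\tfrac{2}{n(n-1)}$). The coefficients $\frac{n+1}{2}-i$ are strictly decreasing in $i$ and sum to $0$, so $p_1(m)\ge\cdots\ge p_n(m)\ge0$ and $\sum_i p_i(m)=1$; hence $\mathbf p(m)\in D_+^1$. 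With this in hand, (\ref{a3}) is nothing but the Lemma applied to each $\mathbf p(m)$, so no further work is needed there.

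For the convergence (\ref{a4}) I would use the elementary identity $\sum_{i=1}^n F_{i:n}(x)=nF(x)$, which follows from $\sum_i F_{i:n}(x)=\sum_i P\{X_{i:n}\le x\}=E\big[\#\{j:X_j\le x\}\big]=nF(x)$. Since $p_i(m)\to\frac1n$ and each $K_n^{(m)},H_n^{(m)}$ is a finite combination of the bounded functions $F_{i:n}$,
\[
\big|K_n^{(m)}(x)-F(x)\big|=\Big|\sum_{i=1}^n\big(p_i(m)-\tfrac1n\big)F_{i:n}(x)\Big|\le\sum_{i=1}^n\big|p_i(m)-\tfrac1n\big|\longrightarrow 0
\]
uniformly in $x$, and the same bound holds for $H_n^{(m)}$; this yields (\ref{a4}) (in fact uniformly, which is stronger than the stated pointwise limit).

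For the rate (\ref{a5}) the key algebraic step is to reindex the second sum ($j=n-i+1$) to obtain $K_n^{(m)}(x)-H_n^{(m)}(x)=\sum_{i=1}^n\big(p_i(m)-p_{n-i+1}(m)\big)F_{i:n}(x)$; by the Lemma this difference is $\ge0$, so the absolute value may be dropped. Because the coefficients $p_i(m)-p_{n-i+1}(m)$ sum to $0$, the individually divergent integrals $\int F_{i:n}$ combine into convergent differences of the form $\int(F_{i:n}-F_{j:n})\,dx=E[X_{j:n}]-E[X_{i:n}]$, and a term-by-term integration gives
\[
\int_{-\infty}^{\infty}\big(K_n^{(m)}(x)-H_n^{(m)}(x)\big)\,dx=\sum_{i=1}^n\big(p_{n-i+1}(m)-p_i(m)\big)E[X_{i:n}]=\delta_m\sum_{i=1}^n(2i-n-1)\,E[X_{i:n}]=:C\,\delta_m .
\]
Taking $\delta_m=1/m$ then gives $C/m$, which is $o(m^{-(1-\alpha)})$ for every $\alpha\in(0,1)$, as claimed.

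The one genuine obstacle is the finiteness of the full-line integral and the interchange in the last display: $\int|K_n^{(m)}-H_n^{(m)}|\,dx$ converges only because $E|X|<\infty$ renders the order-statistic means $E[X_{i:n}]$ finite, and the zero-sum structure of the coefficients is exactly what cancels the divergent parts $\int F_{i:n}\,dx$. I would therefore record $E|X|<\infty$ as a standing hypothesis (it holds trivially in the bounded Example~\ref{Example 1}) and justify the interchange by splitting the integral at $0$ and dominating $|K_n^{(m)}-H_n^{(m)}|$ by $\sum_i|p_i(m)-p_{n-i+1}(m)|(1-F_{i:n})$ on $[0,\infty)$ and by $\sum_i|p_i(m)-p_{n-i+1}(m)|F_{i:n}$ on $(-\infty,0]$, both integrable under this hypothesis; everything else is routine bookkeeping.
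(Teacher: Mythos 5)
Your proof is correct, and its core is in substance the paper's: your family $p_i(m)=\frac{1}{n}+\bigl(\frac{n+1}{2}-i\bigr)\delta_m$ is exactly the paper's choice $p_i(m)=\frac{m+n-i+1}{a_n(m)}$ with $a_n(m)=nm+\frac{n(n+1)}{2}$, under the reparametrization $\delta_m=1/a_n(m)$ (one checks $p_i(m)-\frac{1}{n}=\bigl(\frac{n+1}{2}-i\bigr)/a_n(m)$ there), and your verification of (\ref{a3}) via the Lemma and of (\ref{a4}) via $p_i(m)\to\frac{1}{n}$ together with $\sum_{i}F_{i:n}=nF$ matches the paper's (your uniform-in-$x$ bound is a small strengthening). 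Where you genuinely diverge is (\ref{a5}): the paper centers at $\frac{1}{n}$, applies the triangle inequality, and bounds $\Delta_m\leq(p_1(m)-\frac{1}{n})c_n$ with $c_n=\sum_{i}\int|F_{i:n}(x)-F_{n-i+1:n}(x)|\,dx$, whereas you reindex ($j=n-i+1$), drop the absolute value since $K_n^{(m)}-H_n^{(m)}\geq0$ by the Lemma, and integrate exactly to get $\Delta_m=\delta_m\sum_{i}(2i-n-1)E[X_{i:n}]$; both give $O(1/m)$, hence $o(m^{-(1-\alpha)})$. Your route buys an exact constant instead of an upper bound, and, more importantly, it forces into the open a hypothesis the paper uses silently: the finiteness of $c_n$ is not automatic. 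Indeed, for heavy-tailed $F$ (e.g.\ Cauchy) one has $K_n^{(m)}(x)-H_n^{(m)}(x)\sim(p_1(m)-p_n(m))\,n\bar{F}(x)$ as $x\to\infty$, so the $L_1$ distance in (\ref{a5}) is genuinely infinite; recording $E|X|<\infty$ as a standing assumption, as you do, and justifying the term-by-term integration by splitting at $0$, is a correction the statement actually needs. Two cosmetic caveats: your $\delta_m=\min\bigl(1/m,\frac{2}{n(n-1)}\bigr)$ is undefined for $n=1$ (where the theorem is vacuous anyway), and you might note explicitly that $C=\sum_{i}(2i-n-1)E[X_{i:n}]\geq0$, which follows from the pointwise nonnegativity of $K_n^{(m)}-H_n^{(m)}$ already invoked.
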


\begin{proof}
\bigskip Consider $p_{i}(m)=\frac{m+n-i+1}{a_{n}(m)},$ $i=1,2,...,n;$
$\ \ m\in\{0,1,2,...\},$ where $a_{n}(m)=nm+\frac{n(n+1)}{2}.$ It is clear
that $p_{1}(m)\geq p_{2}(m)\geq\cdots\geq p_{n}(m)$ and $%
{\displaystyle\sum\limits_{i=1}^{n}}
p_{i}(m)=1.$ Since $(\frac{1}{n},\frac{1}{n},...,\frac{1}{n})\prec
(p_{1}(m),p_{2}(m),...,p_{n}(m))$ then from Lemma 1 we have
\begin{equation}%
{\displaystyle\sum\limits_{i=1}^{n}}
p_{i}(m)F_{n-i+1:n}(x)\leq F(x)\leq%
{\displaystyle\sum\limits_{i=1}^{n}}
p_{i}(m)F_{i:n}(x). \label{3a}%
\end{equation}
Since
\[
\underset{m\rightarrow\infty}{\lim}p_{i}(m)=\underset{m\rightarrow\infty}%
{\lim}\frac{m+i}{nm+\frac{n(n+1)}{2}}=\frac{1}{n},\text{ }i=1,2,...,n,
\]
and (\ref{a4}) follows. To prove (\ref{a5}) consider the $L_{1}$ distance
between $K_{n}^{(m)}(x)$ and $H_{n}^{(m)}(x).$ We have
\begin{align*}
\Delta_{m}  &  \equiv%
{\displaystyle\int\limits_{-\infty}^{\infty}}
\left\vert K_{n}^{(m)}(x)-H_{n}^{(m)}(x)\right\vert dx\\
&  =%
{\displaystyle\int\limits_{-\infty}^{\infty}}
\left\vert
{\displaystyle\sum\limits_{i=1}^{n}}
p_{i}(m)F_{i:n}(x)-%
{\displaystyle\sum\limits_{i=1}^{n}}
p_{i}(m)F_{n-i+1:n}(x)\right\vert dx\\
&  =%
{\displaystyle\int\limits_{-\infty}^{\infty}}
\left\vert
{\displaystyle\sum\limits_{i=1}^{n}}
p_{i}(m)F_{i:n}(x)-F(x)+F(x)-%
{\displaystyle\sum\limits_{i=1}^{n}}
p_{i}(m)F_{n-i+1:n}(x)\right\vert dx\\
&  =%
{\displaystyle\int\limits_{-\infty}^{\infty}}
\left\vert
{\displaystyle\sum\limits_{i=1}^{n}}
(p_{i}(m)-\frac{1}{n})F_{i:n}+(\frac{1}{n}-p_{i}(m))F_{n-i+1:n}(x)\right\vert
dx\\
&  \leq%
{\displaystyle\sum\limits_{i=1}^{n}}
\left\vert p_{i}(m)-\frac{1}{n}\right\vert
{\displaystyle\int\limits_{-\infty}^{\infty}}
\left\vert F_{i:n}(x)-F_{n-i+1:n}(x)\right\vert dx\\
&  \leq(p_{1}(m)-\frac{1}{n})c_{n}=\frac{\frac{1}{m}\frac{n(n+1)}{2}}%
{n^{2}+\frac{n^{2}(n+1)}{2}\frac{1}{m}}c_{n},
\end{align*}
where $c_{n}=%
{\displaystyle\sum\limits_{i=1}^{n}}
{\displaystyle\int\limits_{-\infty}^{\infty}}
\left\vert F_{i:n}(x)-F_{n-i+1:n}(x)\right\vert dx.$
\end{proof}

In Figure 3 the graphs of $H_{3}(x),$ $K_{3}(x)$ for $n=3$ in case of standard
normal distribution $N_{0,1}(x)$ for a vector ($p_{1},p_{2},p_{3}%
)=(2/3,2/9,1/9),$ which clearly is not a member of the sequence $\mathbf{p}%
(m).$The numerical calculations in Maple 13 show that $%
{\displaystyle\int\limits_{-\infty}^{\infty}}
\left\vert K_{n}^{(m)}(x)-H_{n}^{(m)}(x)\right\vert dx=0.30903.$%

\[
\]%
\begin{align*}
Figure\text{ 2. Graphs of }H_{3}(x) &  \leq N_{0,1}(x)\leq K_{3}(x),\text{ }\\
n &  =3\text{ and (}p_{1},p_{2},p_{3})=(2/3,2/9,1/9)\
\end{align*}

The members of the sequence $\mathbf{p}(m),m=1,2,...$ are most "uniform", and
according to the basic idea of majorization they must allow better
approximation than any other vector. To illustrate the rate of convergence in
case of standard normal distribution we present in Figure 3 below, \ the
graphs of $K_{n}^{(m)}(x)$ $\leq N_{0,1}(x)\leq H_{n}^{(m)}(x).$ Some
numerical values of $\Delta_{m}$ for different values of $m$ are presented in
Table 1.%
\begin{align*}
& \\
&
\end{align*}

\bigskip\ \ \ \ \ \ \ \ \ \ %

\begin{align*}
Figure\text{ }3.\text{ Graphs of }K_{n}^{(m)}(x) &  \leq N_{0,1}(x)\leq
H_{n}^{(m)}(x)\text{ ,}n=3\\
m &  =2,3,10
\end{align*}

\ \ \ %

\[
Table\text{ }1.\text{Values of }\Delta_{m}%
\]

\begin{align*}
&
\begin{tabular}
[c]{|l|l|l|l|l|l|}\hline
$m$ & 1 & 2 & 3 & 4 & 5\\\hline
$\Delta_{m}$ & 0.34337 & 0.13735 & 0.10301 & 0.08241 & 0.06867\\\hline
\end{tabular}
\\
&
\begin{tabular}
[c]{|l|}\hline
$m$\\\hline
$\Delta_{m}$\\\hline
\end{tabular}%
\begin{tabular}
[c]{|l|l|l|l|l|}\hline
10 & 15 & 20 & 25 & 30\\\hline
0.03434 & 0.02423 & 0.018729 & 0.01526 & 0.01288\\\hline
\end{tabular}
\end{align*}

\begin{remark}
\label{Remark 1}It is clear that using Proposition 1 and 2 and using order
statistics $X_{i.n},$ instead of $F_{i:n}$ we have similar to Lemma 1 and
Theorem 1 results for order statistics. Let $\bar{X}=%
{\displaystyle\sum\limits_{i=1}^{n}}
X_{i}.$ Then, for a sequence $\mathbf{p}(m)=(p_{1}(m),p_{2}(m),...,p_{n}%
(m))\in D_{+}^{1},$ $m=1,2,...$ \ it is true that
\begin{equation}
X_{m}^{L}\equiv%
{\displaystyle\sum\limits_{i=1}^{n}}
p_{i}(m)X_{i:n}\leq\bar{X}\leq%
{\displaystyle\sum\limits_{i=1}^{n}}
p_{i}(m)X_{n-i+1:n}=X_{m}^{U}\text{ a.s.} \label{4a}%
\end{equation}
and%
\begin{equation}
\underset{m\rightarrow\infty}{\ \lim}\left[
{\displaystyle\sum\limits_{i=1}^{n}}
p_{i}(m)X_{n-i+1:n}-%
{\displaystyle\sum\limits_{i=1}^{n}}
p_{i}(m)X_{i:n}\right]  =0\text{ \ a.s..} \label{5a}%
\end{equation}
Furthermore,
\[
E\left\vert X_{m}^{U}-X_{m}^{L}\right\vert =o(\frac{1}{m^{1+\alpha}}),\text{
}0<\alpha<1.
\]
From (\ref{5a}) it follows that
\begin{equation}%
{\displaystyle\sum\limits_{i=1}^{n}}
p_{i}(m)\mu_{i:n}\leq E(X)\leq%
{\displaystyle\sum\limits_{i=1}^{n}}
p_{i}(m)\mu_{n-i+1:n}, \label{6a}%
\end{equation}
where $\mu_{i:n}=E(X_{i:n}).$ \ The rate of convergence can be estimated as
\begin{align*}
\delta_{m}  &  =\left\vert
{\displaystyle\sum\limits_{i=1}^{n}}
p_{i}(m)\mu_{n-i+1:n}-%
{\displaystyle\sum\limits_{i=1}^{n}}
p_{i}(m)\mu_{i:n}\right\vert \\
&  \leq\frac{\frac{1}{m}\frac{n(n+1)}{2}}{n^{2}+\frac{n^{2}(n+1)}{2}\frac
{1}{m}}C_{n},
\end{align*}
where $C_{n}=%
{\displaystyle\sum\limits_{i=1}^{n}}
\left\vert \mu_{n-i+1:n}-\mu_{i:n}\right\vert .$
\end{remark}


\begin{thebibliography}{9}                                                                                                %


\bibitem {}Arnold, B., Balakrishnan, N. and Nagaraja, H.N. (1992) \textit{A
First Course in Order Statistics.} John Wiley \ \& Sons, Inc.

\bibitem {}David, H. (1981) \textit{Order Statistics.} Second Edition, Wiley,
New York.

\bibitem {}David, H.A. and Nagaraja, H.N. (2003) \textit{Order Statistics,}
Third Edition, Wiley, New York.

\bibitem {}Barlow, R.E. and Proschan, F. (1975) \textit{Statistical Theory of
Reliability and Life Testing. Holt,} Rinehart and Winston, Inc.

\bibitem {}Marshal, A. W. , Olkin, I. and Arnold, B.C. (2011)
\textit{Inequalities: Theory of Majorization and Its Applications. }Second
edition.\textit{\ }Springer.

\bibitem {}Samaniego, F.J. (2007) \textit{System Signatures and their
Applications in Engineering Reliability}. Springer.
\end{thebibliography}
\end{document}